\documentclass{amsart}

\usepackage{setspace}
\usepackage{graphicx}% need for figures
\usepackage{verbatim}   % useful for program listings
\usepackage{color}      % use if color is used in text
\usepackage{subfigure}  % use for side-by-side figures
\usepackage{hyperref}   % use for hypertext links, including those to external documents and URLs

% don't need the following. simply use defaults
\setlength{\baselineskip}{16.0pt}    % 16 pt usual spacing between lines

\setlength{\parskip}{3pt plus 2pt}
\setlength{\parindent}{20pt}
\setlength{\oddsidemargin}{0.5cm}
\setlength{\evensidemargin}{0.5cm}
\setlength{\marginparsep}{0.75cm}
\setlength{\marginparwidth}{2.5cm}
\setlength{\marginparpush}{1.0cm}
\setlength{\textwidth}{150mm}

% these all use the theorem counter
\newtheorem{thm}{Theorem}[section]

\newtheorem{lemma}[thm]{Lemma}

\theoremstyle{definition}

% these have their own counters

%\newtheorem{conjecture}{Conjecture}[section]
%\theoremstyle{remark}
%\newtheorem{remark}{Remark}[section]
\numberwithin{equation}{section}
\def\BMT{\lower0.9em\hbox{\includegraphics{BMT.pdf}}}
\def\TMB{\lower0.9em\hbox{\includegraphics{TMB.pdf}}}
\def\Msplit{\lower0.8em\hbox{\includegraphics{Msplit.pdf}}}
\def\Tsplit{\lower1.0em\hbox{\includegraphics{Tsplit.pdf}}}
\def\Bsplit{\lower1.0em\hbox{\includegraphics{Bsplit.pdf}}}
\def\NewTsplit{\lower1.0em\hbox{\includegraphics{Tsplit2.pdf}}}
\def\NewBsplit{\lower1.0em\hbox{\includegraphics{Bsplit2.pdf}}}
\def\Ufirstsplit{\lower0.7em\hbox{\includegraphics{Ufirstsplit.pdf}}}
\def\Usecondsplit{\lower0.7em\hbox{\includegraphics{Usecondsplit.pdf}}}

\begin{document}
\title[Generalized Augmented Alternating Links]
{Generalized Augmented Alternating Links \\ and Hyperbolic Volumes}

\date{\today}
\author[Colin Adams]{Colin Adams}
\address{Department of Mathematics and Statistics, Williams College, Williamstown, MA 01267}
\email{Colin.C.Adams@williams.edu}

\begin{abstract} Augmented alternating links are links obtained by adding trivial components that bound twice-punctured disks to non-split reduced non-2-braid prime alternating projections. These links are known to be hyperbolic. Here, we extend to show that generalized augmented alternating links, which allow for new trivial components that bound $n$-punctured disks, are also hyperbolic. As an application we consider generalized belted sums of links and compute their volumes.
\end{abstract}
\maketitle

\section{Introduction}\label{S:intro}

A non-split link in $S^3$ is known to be either hyperbolic or to contain an essential torus or annulus in its complement. When the link is hyperbolic, its complement admits a hyperbolic metric that is uniquely determined and hence, the hyperbolic volume of its complement becomes an invariant that can be used to distinguish it from other links. 

In  \cite{Men}, Menasco proved that prime alternating non 2-braid links are hyperbolic. In \cite{Adams1}, it was further proven that augmented alternating links  are hyperbolic. These links are obtained from a prime non 2-braid alternating link projection by adding trivial ``vertical" components perpendicular to the projection plane  that bound a disk punctured twice by the alternating link. These augmented alternating links have proved useful in a variety of contexts. In particular,  they appear as the geometric limits of alternating links that correspond to twisting the two strands around which the augmenting components wrap. As such, together with the alternating links, they form the closure of the collection of alternating links in the geometric topology (see \cite{Lack}). In particular, the volumes of the links in such a sequence must approach the volume of  the augmented link from below.

If a link is not alternating, one can augment it at a subset of the crossings that would need to be changed to make it alternating, and then the result has the same complement as an augmented alternating link, since a full twist along one of the twice-punctured disks can reverse that crossing. 

Augmented alternating links are useful in obtaining upper bounds on volumes of links. See for instance \cite{Lack} and \cite{Pur1}. In some papers, the links considered are maximally augmented. That is to say, every crossing in the original knot is in a twist sequence around which a vertical component has been added. In that case, one can use Andreev's Theorem to prove hyperbolicity (c.f. \cite{Pur6}). 
       
In this paper, we extend the results of \cite{Adams1} to allow the vertical components to bound disks that are punctured more than twice by the alternating link in the projection plane. These new links are called generalized augmented alternating links. A precise definition appears in Section 2. Our main theorem is to prove that indeed, their complements are always hyperbolic.

In particular, since $(1,q)$-Dehn filling of these vertical components corresponds to adding $q$ full twists to the strands of the original link, the fact the augmented link is hyperbolic implies that the resulting links are always hyperbolic for high enough values of the $q's$. 

Note that in several papers, authors have considered generalized augmented links that were also obtained by adding vertical components to a projection, but in this case, not necessarily alternating, such that the projection breaks up into generalized twist regions as in Figure \ref{twist}. But here again,  each twist region has to receive a crossing circle. These links have a variety of interesting properties as discussed in \cite{FKP}, \cite{Pur2}, \cite{Pur5}, \cite{Pur3} and \cite{Pur4}.

\begin{figure}[h]
\begin{center}
\includegraphics[scale=0.7]{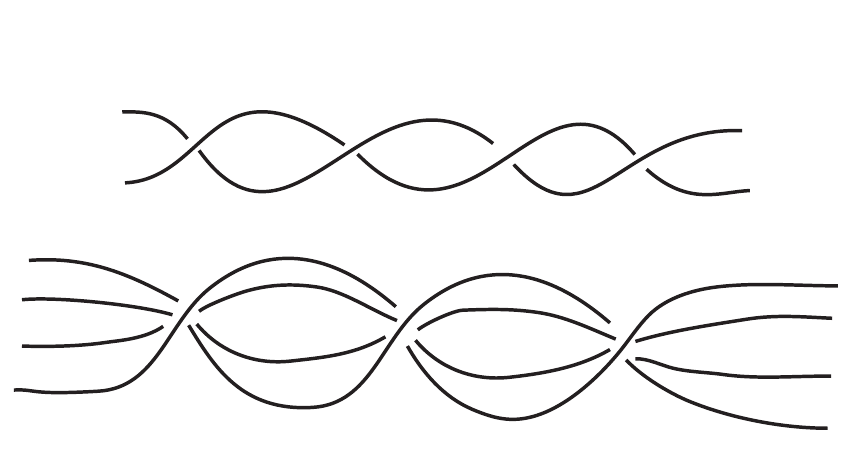}
\caption{A traditional twist region and a generalized twist region.}
\label{twist}
\end{center}
\end{figure}

The presence of  twice-punctured disks  in link complements and the fact twice punctured disks are totally geodesic with a unique hyperbolic structure (see \cite{Adams2}) implies that one can take belted sums of the links. The resulting link $L_1 \#_b L_2$ has volume equal to the sum of the volumes of the two links $L_1$ and $L_2$ that are summed, as in Figure \ref{beltedsum}(a).

In Section 3, we generalize the notion of belted sum, and find an explicit  formula  between the volumes  of the two links and their summand. Specifically, we show that if a hyperbolic link denoted $L'_1 \#_b L'_2$ is constructed from two links $L'_1$ and $L'_2$ as in Figure \ref{beltedsum}(b), then $\mbox{vol}(L'_1 \#_bL'_2) = \mbox{vol}(L'_1) + \mbox{vol}(L'_2) - 4(3.6638...)$. Similarly, if $L''_1 \#_bL''_2$ is a link constructed as in Figure \ref{beltedsum}(c), then $\mbox{vol}(L''_1 \#_bL''_2) = \mbox{vol}(L''_1) + \mbox{vol}(L''_2) - 8(3.6638...)$. In this case, there are two distinct options for the central belt, wrapping either lower left to upper right or lower right to upper left. 

\begin{figure}[h]
\begin{center}
\includegraphics[scale=0.7]{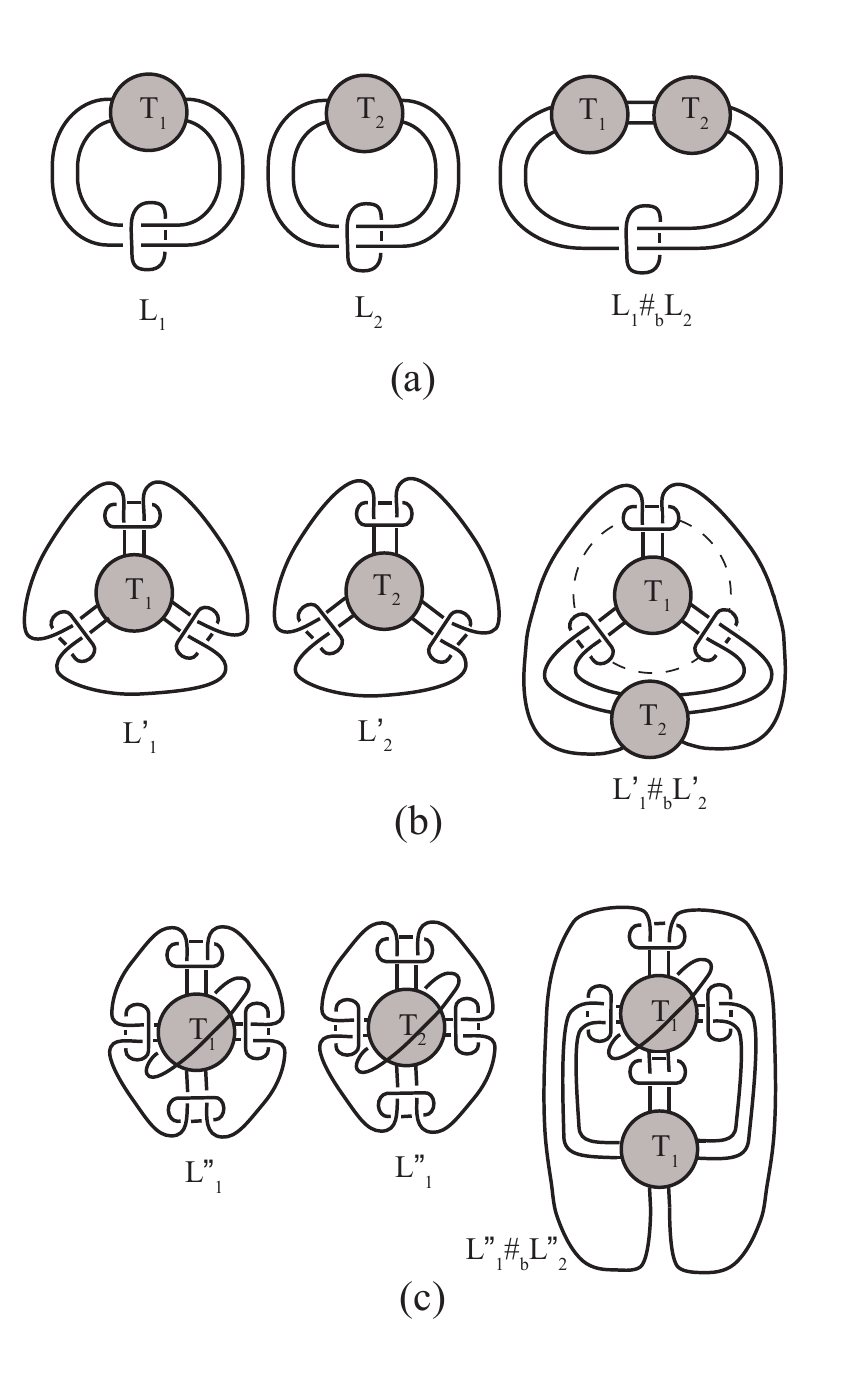}
\caption{Generalized belted sums of links.}
\label{beltedsum}
\end{center}
\end{figure}

More generally, let $L_1$ and $L_2$ be two links, each with  a $2n$-string tangle at center, with $n$ belts around adjacent pairs of the exiting strings and $n-3$ belts around the central tangle in the same pattern, as for instance appears in Figure \ref{beltedsum2} in the case $n=5$.  Then $\mbox{vol}(L_1 \#_bL_2) = \mbox{vol}(L_1) +\mbox{vol}l(L_2) - 4(n-2)3.6638...$. This construction answers a question asked by Oliver Dasbach about the behavior of the volumes in Figure 1(b), and was motivated by that question.

\begin{figure}[h]
\begin{center}
\includegraphics[scale=0.7]{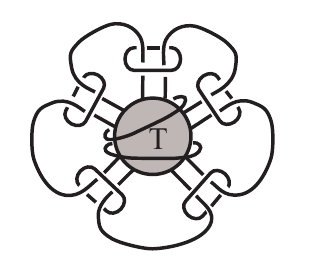}
\caption{A potential generalized belted sum factor link.}
\label{beltedsum2}
\end{center}
\end{figure}

Of course, given a specific link, we would like to add vertical components to obtain either a link that can be decomposed via belted sum into simpler links or composed via belted sum into more complicated links. But we need to know that the resulting link is hyperbolic. This is what the main theorem provides when the initial link is alternating. 

In the case of augmented alternating links, the fact they are hyperbolic and the fact that twice-punctured disks are totally geodesic with a unique hyperbolic structure implies that twisting a half-twist on the twice-punctured disks bounded by a vertical component, which adds or subtracts a crossing,  will yield a new link complement that is hyperbolic with the same volume as the original link. However, in the case of generalized augmented alternating links, if we twist a half-twist on an $n$-punctured disk bounded by a vertical component for $n \ge 3$, the result need not be hyperbolic, and even if it is, the volume is generally not preserved. As an example, adding one vertical component bounding a thrice-punctured disk in the figure-eight knot complement, and then twisting a half-twist yields a Seifert fibered space. Further applications of generalized augmented alternating links  to volume bounds for links appear in \cite{Adams3}. 

\section{Hyperbolicity}

Given an alternating link $J$ in a reduced alternating projection $P$, we will often consider it a 4-regular graph on the projection sphere. That graph cuts the sphere up into complementary regions. 

 Let $J$ be a prime non-split non-2-braid alternating link. Let $P$ be a reduced alternating projection of $J$. Note that by results of Menasco in \cite{Men}, the projection is connected and there are no simple closed curves in the plane that intersect the projection transversely twice such that there are crossings to either side of the curve. Choose two complementary regions in the projection plane that do not share an edge. Take a trivial component $C$ that intersects the projection sphere in precisely one point in each of the complementary regions. Then we say that $J' = J \cup C$ is a {\it generalized singly augmented alternating link}.  We call the additional component a {\it vertical component}.

In the projection plane, we keep track of the vertical component as a grey arc $\gamma$. While fixing endpoints, we can isotope $\gamma$ to minimize the number of intersections with the link $J$. This corresponds to an isotopy of the vertical component in the complement of $J$. We will assume that such an isotopy has already taken place, and call the corresponding vertical component a minimal representative of the isotopy class. 

For any other pair of non-adjacent complementary regions, we allow the introduction of additional vertical components, as long as there are minimal representations of all the individual vertical components that are disjoint as arcs in the plane. We call the resulting link a {\it generalized augmented alternating link}. Note that for any pair of non-adjacent regions there can be at most one corresponding vertical component. There can be quite a few vertical components as for instance appears in the fully augmented figure-eight knot in Figure \ref{fullyaugmented}. Note that for the figure-eight knot, there is more than one option for a fully augmented link that results.

\begin{figure}[h]
\begin{center}
\includegraphics[scale=0.4]{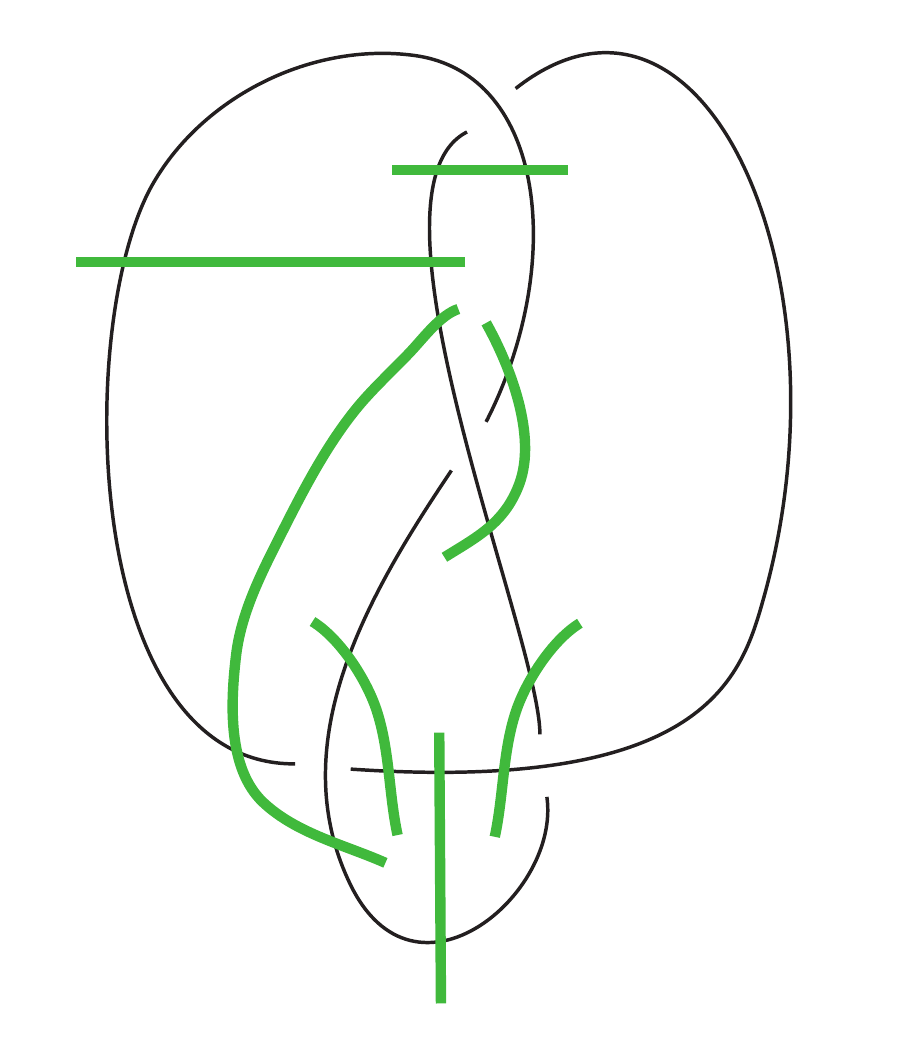}
\caption{A generalized augmented projection of the figure-eight knot with the maximum possible number of vertical components.}
\label{fullyaugmented}
\end{center}
\end{figure}

It should be noted that there are reduced alternating projections such that not all of the possible vertical components can be added since minimal representations overlap as in Figure \ref{intersection}.

\begin{figure}[h]
\begin{center}
\includegraphics[scale=0.4]{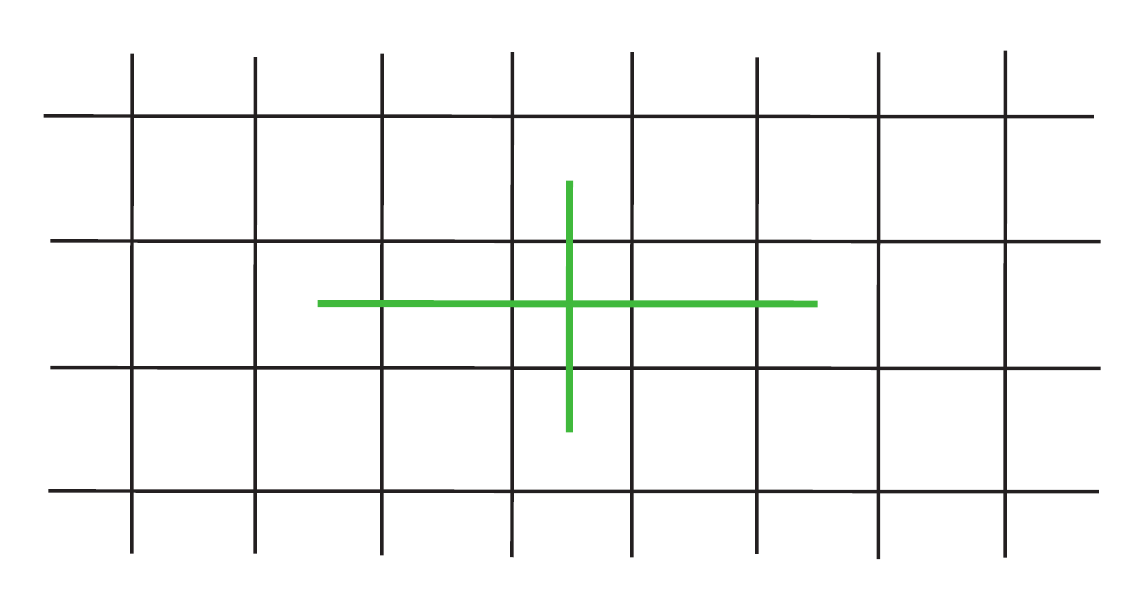}
\caption{These two vertical components on this alternating grid cannot be made to avoid intersecting while in minimal representations.}
\label{intersection}
\end{center}
\end{figure}

\begin{thm} \label{main} Let $J$ be a prime non-2-braid non-split alternating link. Then any generalized augmented alternating link $J'$ constructed from a reduced alternating projection of $J$ is hyperbolic.
\end {thm}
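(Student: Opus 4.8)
The plan is to follow the strategy of \cite{Adams1}, where Adams proved hyperbolicity of ordinary augmented alternating links, and to upgrade it to handle vertical components bounding $n$-punctured disks. By Menasco--Thurston, it suffices to show that the complement $S^3 \setminus J'$ contains no essential sphere, disk, torus, or annulus (with the torus/annulus exclusions ruling out Seifert-fibered and reducible pieces and forcing hyperbolicity via Thurston's geometrization for Haken manifolds). First I would record the structure coming from the vertical components: each vertical component $C$ bounds an $n$-punctured disk $D$ (an $n$-punctured sphere meeting no other $C$, by the disjointness of the minimal grey arcs), and the $n$-punctured disks are incompressible and $\partial$-incompressible in $S^3 \setminus J'$ because the punctures are ``essential'' — each puncture is pierced by a strand of $J$ that cannot be isotoped off. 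The reduced, prime, non-2-braid alternating hypothesis on $J$, via Menasco's results quoted in the excerpt (connectivity of $P$, no transverse bigon circles with crossings on both sides), is what guarantees that these punctures genuinely link the strands and cannot be trivialized.

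The core of the argument is a cut-and-cross-section analysis. I would take a hypothetical essential surface $F$ (sphere, disk, torus, or annulus) in $S^3 \setminus J'$, and put it in general position with respect to the union of the $n$-punctured disks $\{D_i\}$ and with respect to the projection sphere $\Sigma_0$. A standard innermost-disk / outermost-arc argument on $F \cap D_i$ lets me reduce to the case where $F$ meets each $D_i$ in arcs that are essential in the punctured disk; because an $n$-punctured disk is hyperbolic, an essential arc separates punctures, and one can then cut $S^3 \setminus J'$ along all the $D_i$. What remains after cutting is (a disjoint union of pieces, but really) the complement of the alternating link $J$ itself — or rather of a sublink/quotient obtained by the surgery along the disks — together with the induced (now meridional or longitudinal) boundary pattern on $F$. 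At this stage I invoke Menasco's machinery for prime alternating links directly: the cut surface $\hat F$ sits in $S^3 \setminus J$ with controlled intersection with the projection sphere, and Menasco's theorem (\cite{Men}) that a prime non-2-braid alternating link complement is atoroidal and anannular forces $\hat F$ to be inessential there, from which one argues back that $F$ was inessential (or boundary-parallel) in $S^3 \setminus J'$ to begin with. The sphere and disk cases are the reducibility and $\partial$-reducibility statements and are handled first and most easily: a reducing sphere can be pushed off all the $D_i$ by innermost circles, landing in the irreducible $S^3 \setminus J$.

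The step I expect to be the main obstacle is the annulus case, and more precisely controlling what happens when $F \cap D_i$ contains arcs that, after the innermost-outermost reductions, cannot be removed — i.e.\ genuinely essential arcs in an $n$-punctured disk with $n \ge 3$, which (unlike the $n=2$ case, where an essential arc in a twice-punctured disk is forced to separate the two punctures and the combinatorics collapse) have many isotopy classes and can wind among the punctures. I would handle this by a careful bookkeeping of how such arcs meet the strands of $J$ and by using the \emph{minimality} of the grey-arc representatives (so that no $D_i$ can be isotoped to reduce intersections with $J$), together with the hypothesis that the chosen pairs of regions are non-adjacent (so the relevant arcs of $\gamma$ separate crossings, preventing degenerate configurations). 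A secondary obstacle is ensuring that after cutting along all of $\{D_i\}$ simultaneously the resulting manifold really is (a union of copies of) the alternating link complement with an understood boundary pattern — the simultaneity requires that the $D_i$ be made mutually disjoint, which is exactly what the disjointness of minimal grey arcs provides, but one must check the disks (not just the arcs) can be made disjoint and that no new essential annuli are created by the boundary pattern on the $n$-punctured-disk ``walls.'' Once these are in place, the exclusion of essential tori and annuli in $S^3\setminus J$ closes the argument and Thurston's theorem yields hyperbolicity.
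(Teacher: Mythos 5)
Your overall plan (rule out essential spheres, tori and annuli, then quote geometrization) matches the goal, but the central reduction you propose does not work. You want to cut $S^3\setminus J'$ along the $n$-punctured disks $D_i$ and claim the result is (essentially) the complement of the alternating link $J$, so that Menasco's atoroidality/anannularity can be invoked for the cut surface $\hat F$. Cutting along $D_i$ does not produce $S^3\setminus J$: each vertical component $C_i$ is unknotted, so cutting its solid-torus exterior along the punctured meridian disk yields a tangle complement in a ball, with the two copies of $D_i$ lying in the boundary sphere; cutting along all the disks leaves a union of tangle complements, not an alternating link complement in $S^3$. Menasco's results in \cite{Men} concern closed, pairwise incompressible (and meridianally incompressible) surfaces in alternating link complements in $S^3$, and say nothing about the bounded surface $\hat F$ sitting in these tangle complements, so the step ``invoke Menasco to force $\hat F$ inessential'' has no theorem behind it. Moreover, the point you yourself flag as the main obstacle --- essential arcs of $F\cap D_i$ in an $n$-punctured disk with $n\ge 3$, which unlike the $n=2$ case have many isotopy classes --- is precisely where the content of the theorem lies, and ``careful bookkeeping'' is not an argument. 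This is also why the strategy of \cite{Adams1} does not simply upgrade: for $n\ge 3$ twisting along the disk changes the complement (the introduction notes a half-twist can even produce a Seifert fibered space), so the structural shortcuts available for twice-punctured disks are gone.

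For comparison, the paper never cuts along the vertical disks. It first proves Lemma \ref{incom} (incompressibility of each vertical punctured disk) by applying the bubble/saddle machinery to the projection of $J$ with $C$ kept vertical, using minimality of the grey arc and a fork argument; your one-line justification (``each puncture cannot be isotoped off'') is not a proof, and the statement genuinely uses minimality and primality. It then applies Menasco's machinery directly to a closed essential torus in $S^3\setminus J'$: the vertical components are flattened into the projection plane to obtain standard position, then restored, and the new phenomena (intersection curves meeting a bubble more than once, non-disk pieces of $T\cap B_\pm$) are handled via the pairing property along the vertical disks and by showing that vertical components cannot block every meridianal compression or saddle-reducing isotopy; annular and punctured-torus pieces are excluded by separate linking arguments. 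Finally, essential annuli with non-meridional boundary are eliminated by Seifert-fibering and Dehn-filling arguments (Hatcher's Lemma 1.16) together with cabling considerations and the incompressibility of the vertical disks --- cases your proposal does not address at all. As written, the proposal has a genuine gap at its key reduction step and in the annulus analysis.
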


In fact, even in the case of a 2-braid link in a reduced alternating projection, if we add a vertical component that does not correspond to the axis around which the knot is braided, the result is hyperbolic. However, we will not include that case here.

It was proved in \cite {Men} that $J$ is hyperbolic. Here, we are proving that the addition of these new vertical component preserves hyperbolicity. We  use the machinery developed in \cite{Men} as described below.

We consider the projection plane $P$ as a subspace of the projection sphere obtained by the 1-point compactification. We will move from the plane to the sphere without comment. 

Let $L$ be a link in a projection. At each crossing of the projection, a bubble is inserted, with the overstrand going over the top of the bubble and the understrand going under the bubble. We denote $S_+$ to be the sphere obtained by replacing each equatorial disk of a bubble in the projection sphere with the top hemisphere of the corresponding bubble. We define $B_+$ to be the ball bounded by  $S_+$. Similarly, we define $S_-$ and $B_-$, using the bottom hemispheres.  In Lemma 1 of \cite{Men}, Menasco showed that for any projection, and any surface that is incompressible and not boundary parallel, the surface can be isotoped so that it intersects bubbles in saddles and each intersection curve with $S_{\pm}$ intersects each bubble at most once and intersects at least one bubble. 

 We begin with the following lemma.

\begin{lemma} \label{incom} If $C$ is a vertical component in a generalized augmented alternating link $J'$ such that $C$ is a minimal representative of its isotopy class, then the vertical punctured disk $D$ that is bounded by $C$ is incompressible.
\end{lemma}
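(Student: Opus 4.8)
The plan is to argue by contradiction, converting a hypothetical compression of $D$ into a $2$-sphere meeting $J$ in few points and then invoking primeness of $J$ together with the machinery of \cite{Men}. Write $\widehat D$ for the properly embedded planar surface $D \cap (S^3 \setminus \operatorname{int} N(J'))$; since $D$ is a disk met transversely by $J$ in the $n$ points $p_1,\dots,p_n$ strung along $\gamma$, the surface $\widehat D$ has $n+1$ boundary circles (a longitude of $C$ together with $n$ meridians of $J$), and a simple closed curve on $D$ is essential in $\widehat D$ exactly when the subdisk it bounds on $D$ contains at least two and at most $n-1$ of the punctures. In particular $\widehat D$ carries no essential curve when $n\le 2$, so assume $n\ge 3$ and let $E$ be a compressing disk. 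After the usual innermost-disk surgeries on $E\cap D$ we may take $E\cap D=\partial E=:c$, so that $c$ bounds a subdisk $D_0\subset D$ containing exactly $k$ punctures with $2\le k\le n-1$; and since each component of $J$ meets a $2$-sphere evenly, $k$ is even.

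Capping $D_0$ off with $E$ produces an embedded $2$-sphere $\Sigma=D_0\cup_c E$ meeting $J$ transversely in exactly $k$ points and disjoint from $C$ (and the complementary piece $(D\setminus D_0)\cup E$ is a spanning disk for $C$ meeting $J$ in only $n-k<n$ points). The crux is to rule out such a $\Sigma$. When $k=2$, primeness of $J$ says that $\Sigma$ bounds a ball in which $J$ is a single $\partial$-parallel arc $\alpha$ joining the two punctures on $c$; using the product region realizing $\partial$-parallelism of $\alpha$, one isotopes that strand of $J$ across the ball, onto $\Sigma$ and off $D_0$, and after arranging (by a further innermost argument) that the two punctures are consecutive along $\gamma$ and that the product region meets $D$ in a subarc of $\gamma$, this yields an isotopy of $\gamma$ in the projection plane removing those two intersection points — contradicting minimality of $\gamma$.

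The main obstacle is the case $k\ge 4$, because a $2$-sphere meeting a prime alternating link in four or more points need not bound a trivial tangle. The key is to exploit that $\Sigma$ is built from the vertical piece $D_0$, which is disjoint from every bubble and meets each of $S_+$, $S_-$ only in subarcs of $\gamma$. First, if $\Sigma\setminus N(J)$ is compressible one compresses it, which only decreases $k$; since a punctured sphere is never $\partial$-parallel in a link exterior, the resulting incompressible surface is subject to Lemma~1 of \cite{Men}. Applying those isotopies to the part coming from $E$, one arranges that each component of $\Sigma\cap S_\pm$ meets every bubble at most once, and then runs Menasco's saddle count on $\Sigma$. Using that all $k$ endpoints of the tangle cut off by $\Sigma$ lie on the bubble-free disk $D_0$ and that $C$ lies on the far side, this count forces $\Sigma$ to miss all bubbles, hence to lie inside a single complementary region of the projection and meet $J$ in at most one point — the desired contradiction. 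This last step is essentially a localized version of the incompressibility analysis underlying Theorem~\ref{main}. With it in place, $\widehat D$ has no compressing disk, so $D$ is incompressible.
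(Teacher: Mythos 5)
There is a genuine gap, and it sits exactly where you flag ``the main obstacle'': the case $k\ge 4$. Your argument there is asserted rather than proved, and as stated its conclusion is false. In Menasco's standard position for a punctured surface, an intersection curve with $S_{\pm}$ is only required to meet a bubble \emph{or a puncture}; a curve running through the punctures of $\Sigma$ need not meet any bubble. So even if you could force $\Sigma$ to avoid all bubbles, that does not force $\Sigma$ to lie in one complementary region and meet $J$ at most once -- a bubble-free sphere can perfectly well meet $J$ in $k$ points, with its intersection curves passing through those punctures. Moreover, the intermediate claim that a Menasco-style saddle count forces a $k$-punctured sphere ($k\ge4$) to miss all bubbles is not something the machinery gives you: the parity argument that works for $k=2$ (a curve crossing two bubbles with overstrands on the same side must pass an odd number of punctures, of which there are only two) loses its force with four or more punctures, and indeed alternating links can admit essential $4$-punctured spheres. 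The tell-tale sign that the argument cannot be repaired as written is that minimality of $\gamma$ never enters your $k\ge4$ case. But minimality is essential to the lemma: a non-minimal $\gamma$ (say, one making a detour across two strands and back) produces exactly such a sphere $\Sigma=D_0\cup E$ with $k=4$, built from a vertical, bubble-free subdisk $D_0$ with $C$ on the far side. Any proof of the $k\ge4$ case that does not use minimality would ``rule out'' this existing configuration, so the step must be wrong or must secretly invoke minimality -- and you never say how. (The $k=2$ case also has unaddressed details: the trivial ball--arc pair given by primeness may lie on the side of $\Sigma$ containing $C$ and $D\setminus D_0$, and the boundary-parallelism region may cross them, so the claimed isotopy removing two intersections is not automatic; also the isotopy you describe moves $J$, while minimality is defined by isotoping $C$ rel its endpoints with $J$ fixed, so the dual move needs to be justified.)

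For comparison, the paper does not pass to the closed punctured sphere at all; it keeps the compressing disk $E$ itself and studies its intersection pattern with $S_{\pm}$ and the bubbles. Because $\partial E\subset D$, all intersection arcs begin and end on $\gamma$, on one side of it. A tree/graph argument produces a ``fork'' (a saddle with three edges to $\partial E$), and primeness of $J$ (Menasco's Theorem 1(b)) shows the two strands at that crossing run directly to $E$, so $\gamma$ can be isotoped past the crossing, reducing the number of saddles. Iterating, one ends with either no saddles or one saddle, and in both endgames the configuration exhibits an isotopy of $\gamma$ lowering $|\gamma\cap J|$, contradicting minimality. That structure -- arcs anchored on $\gamma$, saddle reduction via primeness, and minimality delivering the final contradiction -- is precisely what your reduction to $\Sigma$ discards, which is why the $k\ge4$ case is left without a real argument.
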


\begin{proof} We drop all the vertical components except for $C$, since if $D$ is incompressible in $S^3-(J \cup C)$, it is incompressible in  $S^3 - J'$. Let $\gamma$ denote the arc in the projection plane that is the projection of $C$.

Suppose that $D$ compressed. Let $E$ be a compressing disk, in general position with respect to the projection of $J$. Applying the Menasco machinery to the projection of $J$ (leaving $C$ vertical), we consider how $E$ intersects the bubbles  and the spheres $S_+$ and $S_-$. Those intersections decompose $E$  into saddle disks corresponding to where it intersects bubbles, and over-disks which have interior above $S_+$ and boundary on $S_+$ and under-disks, which have interior below $S_-$ and boundary on $S_-$. We call the resultant graph on $E$, where saddles are treated as vertices, the intersection graph.

We assume that $E$ has the minimum number of saddles for a compressing disk of $D$.  Suppose first that there is a simple closed intersection curve $\alpha$ on $E$. By this we mean a simple closed curve in the intersection graph that avoids $\partial E$ and that forms a component of the boundary of  a region containing no other intersection curves on $E$. Then it is also a simple closed curve on either $S_+$ or $S_-$. For convenience, assume $S_+$.  Note that all intersection arcs that begin and end on $\partial E$ must begin and end on the same side of $\gamma$ in the projection plane. Since a simple closed intersection curve does not intersect $\gamma$, all the intersection arcs lie to the side of $\alpha$ containing $\gamma$. Take an innermost intersection curve $\alpha'$ to the other side of $\alpha$. If there are none, take $\alpha$ itself considered innermost to the outside. Since the projection is alternating, each intersection curve must intersect bubbles such that the overstrand of the bubble is alternately on the left and right of the curve. This forces $\alpha'$ to hit a bubble twice. But $\alpha'$ bounds a disk $E'$ on $E$. In the case $\alpha'$ hits both sides of a bubble, by taking an arc through $E'$ connected at both ends to an arc through the corresponding saddle, we create a loop in $E$ that encircles the link $J$ once. But that loop bounds a disk on $E$, a contradiction. In the case $\alpha'$ hits the same side of a bubble twice, we can isotope a neighborhood of an arc on $E'$ through the bubble and eliminate two saddles, again a contradiction.

Thus, any such simple closed curve must avoid all bubbles. But then we could replace the disk it bounds on $E$ with the disk it bounds on $S_+$, and push off to lower the number of intersection curves.

Hence, there are no simple closed intersection curves on $E$. All intersection curves are arcs that begin and end on $\partial E$. Note that when viewed in the plane, no such intersection arc crosses $\gamma$ but all intersection arcs  start and end on $\gamma$, all coming out one side of it. See Figure \ref{newfork} for the picture. 

As in \cite{Men}, we can similarly show that no intersection arc intersects a bubble more than once.

A {\it fork} of the intersection graph is a vertex with at least three edges ending on $\partial E$. We show that every intersection graph has at least one fork. Since every complementary region must intersect $\partial E$ in its boundary, the graph obtained by throwing away all edges with an endpoint on the boundary of $E$ is a collection of trees. Every tree of two or more vertices always has at least two leaves, and those leaves will have three edges that must all end on the boundary. So in this case, there are at least two forks. The one exception is if there is only one vertex to one tree, which coincides with the case of there being only one saddle in $E$. However, then we still have a fork.

We consider what a fork tells us about the projection $P$. See Figure \ref{newfork}.The two disks bounded by the one saddle and the three curves ending on the boundary of $E$ cause there to be exactly two arcs of the knot that come out of the crossing in question and then pass through $E$ as punctures, without crossing any other arcs of the knot in between. By Theorem 1(b) of \cite{Men}, which shows that an alternating knot is prime if and only if it is obviously so in any alternating projection, these arcs cannot contain any crossings between when they puncture $E$ and when they pass through the crossing in question.

\begin{figure}[h]
\begin{center}
\includegraphics[scale=0.7]{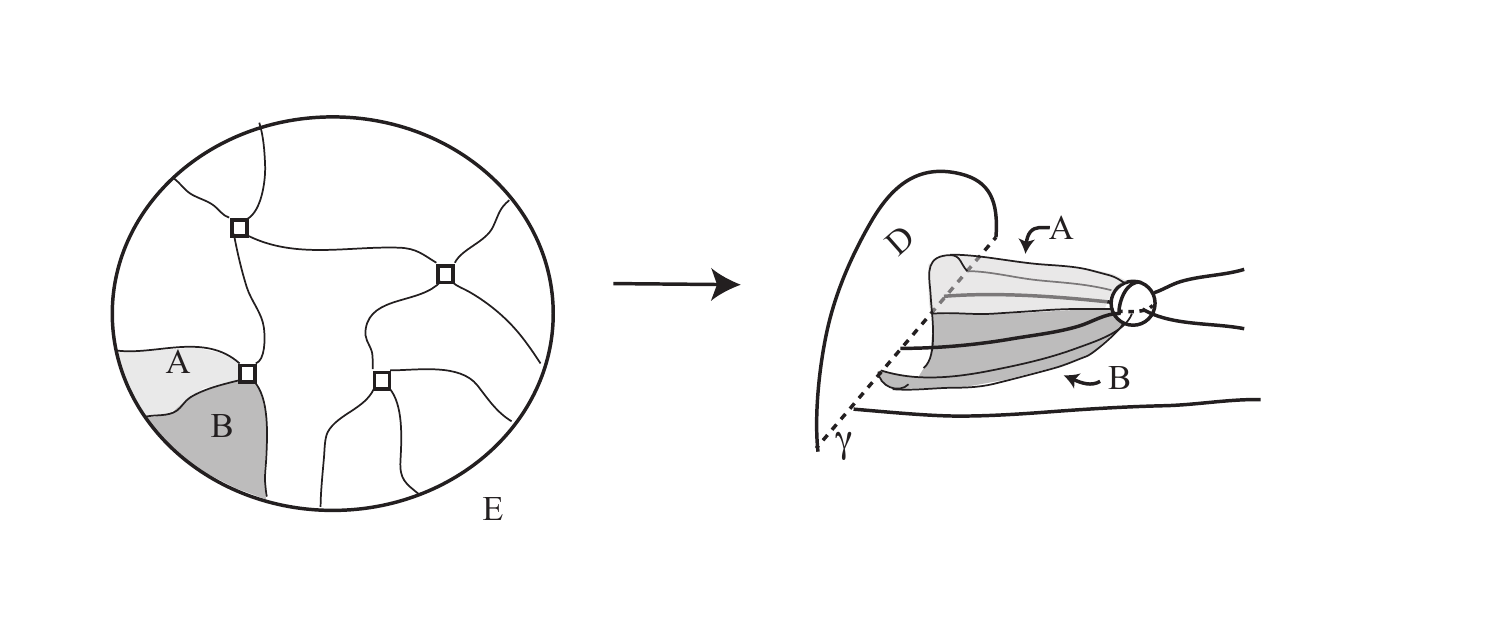}
\caption{A fork in the compression disk allows us to isotope $\gamma$ past a crossing.}
\label{newfork}
\end{center}
\end{figure}

Fixing its endpoints, we can  isotope $\gamma$ past the resultant crossing. There is a corresponding isotopy of $C$, $D$ and $E$, changing the pattern of intersections on $E$ and eliminating at least one saddle on $E$. We repeat this process until either  all saddles are eliminated from $E$, leaving only simple arcs of intersection or there is only one saddle remaining. In the first case,  taking an outermost arc  $\alpha''$ on $E$, we cut off a disk $E''$ which intersects the projection plane in an arc that does not intersect a bubble. Hence we can either isotope an arc on $\gamma$ to this arc, and lower the number of intersections in $\gamma \cap J$, a contradiction to $\gamma$ corresponding to a minimal representative of $C$, or if no part of $J$ lies in the region in the projection plane cut off by $\gamma \cup \alpha''$, we can isotope $E$ to lower the number of intersection arcs. In either case, repeating the process if necessary, we obtain a contradiction.

In the second case, if only one saddle remains, then as in Figure \ref{newfork2}, the arc $\gamma$ can be isotoped  with endpoints fixed to lower its number of punctures, and it is therefore not a minimal representative.

\begin{figure}[h]
\begin{center}
\includegraphics[scale=0.7]{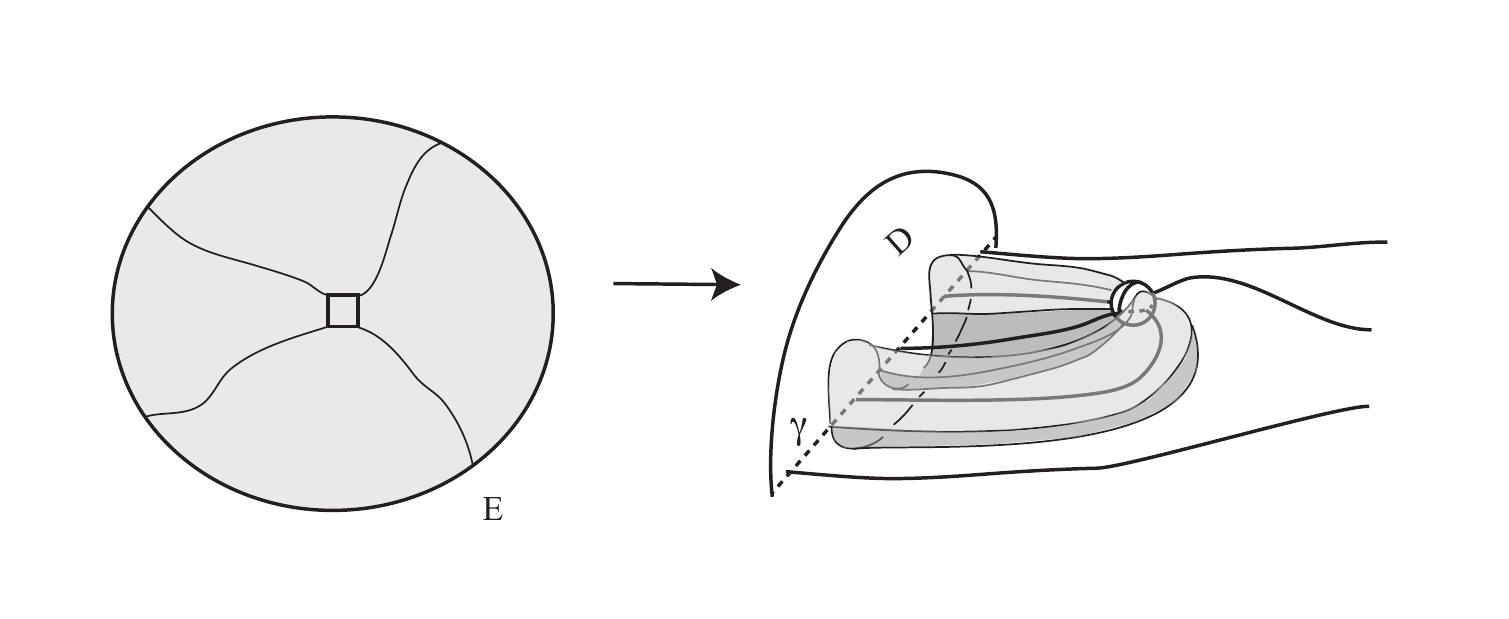}
\caption{A single saddle in the compression disk implies $\gamma$ is not a minimal representative.}
\label{newfork2}
\end{center}
\end{figure}

\end{proof}

\begin{proof}  (of Theorem \ref{main})  In order to prove that $S^3 - J'$ is hyperbolic, it is enough to show that $S^3-J'$ is irreducible, and to show there are no essential tori or annuli in $S^3-J'$. We begin with irreducibility, which is equivalent to showing that $S^3 - J'$ is not splittable. Since the alternating projection of $J$ is connected, Theorem  1(a) from \cite{Men} shows that $J$ is non-splittable. Hence, if $J'$ is splittable, there must be a sphere with $J$ to one side and at least one vertical component $C$  to the other side. Let $D$ be the vertical punctured disk bounded by $C$. Discarding the other vertical components, we work with just this one vertical component. But if $C$ is contained in a sphere, it bounds a disk in the sphere. We can use this disk to obtain a compression disk for $D$,  contradicting Lemma \ref{incom}. Hence, $J'$ is non-splittable. 

We now show that there are no essential tori in $S^3-J'$. Suppose $T$ is such a torus. Then $T$ is neither compressible nor boundary parallel. We first show that any such torus must be meridianally compressible. In other words, there is a nontrivial simple closed curve on the torus that bounds a disk punctured once by the link $J'$. 
 
 We again apply the techniques of \cite{Men}, which Menasco utilized to prove a similar result for alternating links. Suppose there is an essential torus $T$ that  is meridianally incompressible.   First,  we flatten each vertical component into the projection plane as in Figure \ref{flatten}. 

\begin{figure}[h]
\begin{center}
\includegraphics[scale=0.7]{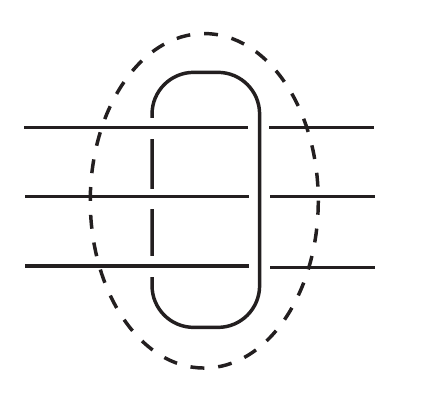}
\caption{Projecting the vertical components.}
\label{flatten}
\end{center}
\end{figure}

Since $T$ is incompressible and meridianally incompressible,  Lemma 1 of \cite{Men} tells us there exists a realization of $T$ such that the intersection curves with $S_+$ and $S_-$ do not intersect the same crossing bubble more than once. Moreover, every curve must intersect at least one bubble. We choose a realization for $T$ so that the number of saddles in bubbles corresponding only to $J$  is minimized.

We now eliminate the vertical components, without isotoping the surface $T$. Saddles that appeared in crossing bubbles involving the vertical components disappear and the intersection curves that entered a dotted region as in Figure \ref{flatten} now connect to one another. 

There are two fundamental changes in the system of intersection curves. First of all, each intersection curve need no longer bound a disk above $S_+$ in the case of $B_+$ and below $S_-$ in the case of $B_-$. Instead, a collection of intersection curves can bound a  subsurface of $T$ above $S_+$ or below $S_-$.  Second,  for each of the resulting intersection curves on $S_+$ and $S_-$, it can either be the case that the curve does or does not intersect a bubble more than once. 

We add the vertical components back in, but now they are once again vertical, perpendicular to the projection plane, each puncturing the projection plane in two points. For convenience, we consider intersection curves on $S_+$, but everything works just as well for intersection curves on $S_-$. We assume that $T$ was chosen to minimize the number of saddles.

The collection of intersection curves and saddles decompose $T$ into  squares on its surface corresponding to the saddles, and the components of intersections with $B_+$ and $B_-$.We first show that with the exception of  possibly a single $n$-punctured torus, all of these components are either disks or annuli. 

Let $R$ be a planar component of $T  \cap B_+$. Suppose an intersection curve $\alpha$ that forms one of the boundaries of $R$ on $S_+$ bounds a disk $F$ on $T$. We show that then $R$ is a disk.  Let $D'$ be the disk bounded by $\alpha$ on $S_+$. It may or may not contain additional intersection curves. If there are no vertical components with endpoints in $D'$, then we can isotope the disk $F$ bounded by $\alpha $ on $T$ to $D'$, pushing any other parts of $T$ out of the way in the process. After this isotopy, we have either eliminated $\alpha$ as an intersection curve, simplifying the intersection graph, or $R$ was a disk in $T \cap S_+$. If $D'$ is punctured  by some vertical component once, then that vertical component will be nontrivially linked with $\alpha$, contradicting the fact $\alpha$ bounds $F$. Hence, any vertical component $C$ that intersects $D'$ must do so with both of its endpoints. No other vertical components can be linked with this one above the projection plane, as in Figure \ref{linked}, since if they were, they would also have to be similarly linked below the projection plane, which the existence of $F$ prevents. 

\begin{figure}[h]
\begin{center}
\includegraphics[scale=0.3]{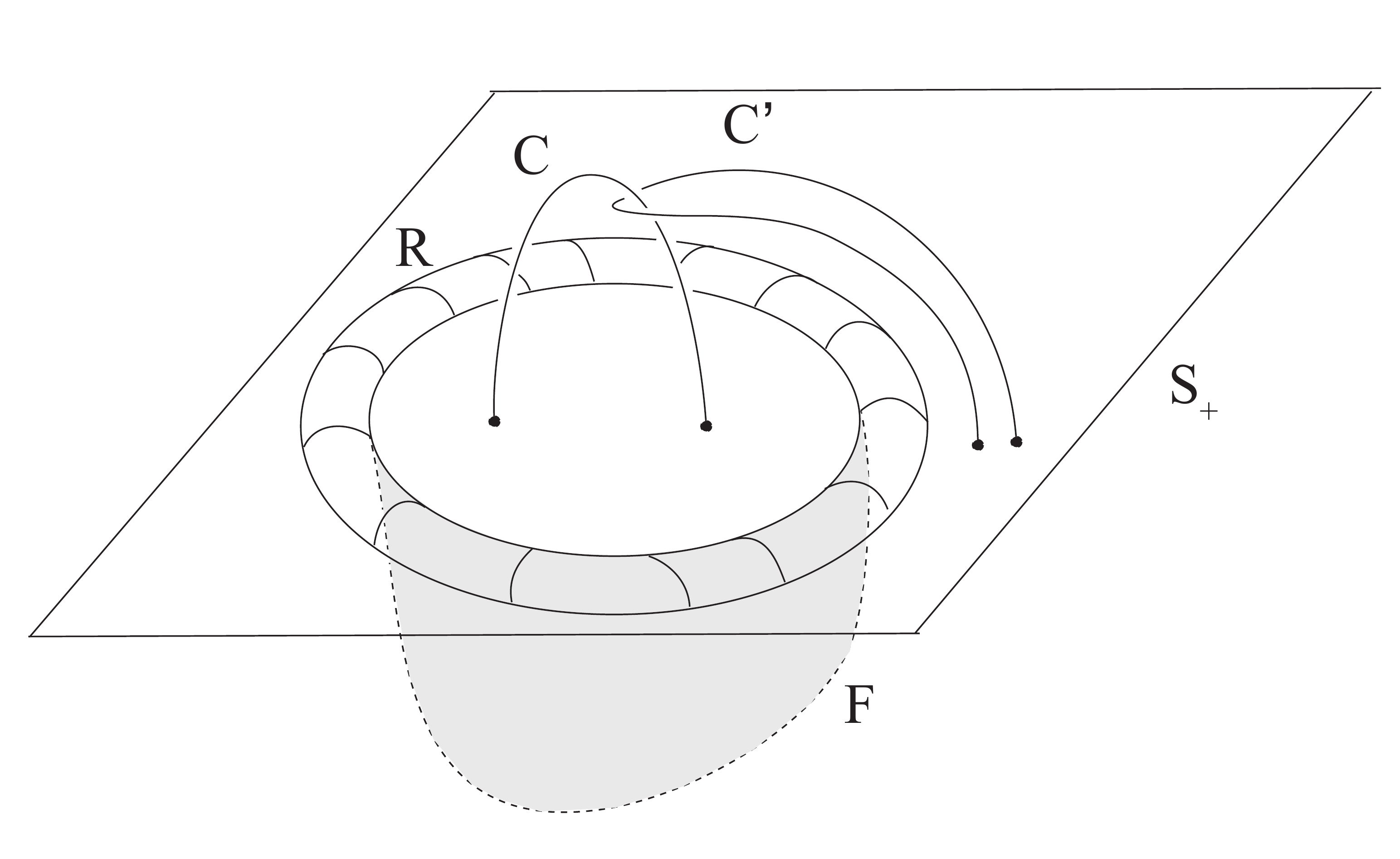}
\caption{Vertical components cannot be linked above the projection plane when an intersection curve bounds a disk on $T$.}
\label{linked}
\end{center}
\end{figure}

We can then take a disk $D''$ that is horizontal and has boundary in $R$ that is a curve parallel to $\alpha$ and slightly above it. Then we can isotope  $F$ to $D'$, eliminating the intersection curve $\alpha$ and any saddles that it touches, a contradiction to minimality. Note we are using the fact $J'$ is non-splittable here. So, the only planar components of $T \cap S_+$ are disks and annuli, with all boundary components of the annuli appearing as parallel nontrivial curves on the torus as in Figure \ref{torus}(b). 

\begin{figure}[h]
\begin{center}
\includegraphics[scale=0.7]{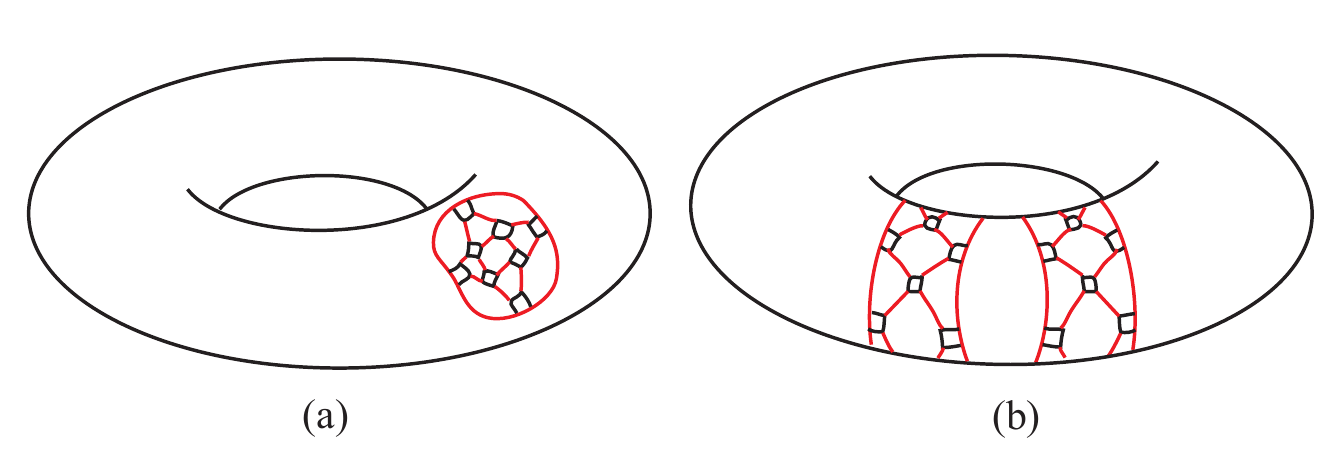}
\caption{Islands as in (a) can only occur on $T$ if there is a component in $T \cap B_{\pm}$ that is an $n$-punctured torus. Annular regions appear as in (b).}
\label{torus}
\end{center}
\end{figure}

We first show that there are no innermost curves on $S_+$ that bound disks in $B_+$.  A bubble that intersects an innermost curve is called an inner(outer) bubble if its overstrand lies to the inside(outside) of the curve. 

Such an innermost curve $\alpha$ bounding a disk $G$ in $T \cap B_+$ must intersect bubbles more than once, as $J$ is alternating, so the bubbles must alternate between bubbles with their overstrand to the right and bubbles with their overstrand to the left as we travel around the curve. Since there are no other curves inside $\alpha$, the other side of each inner bubble must be hit by $\alpha$ as in Figure \ref{typesII}. 

\begin{figure}[h]
\begin{center}
\includegraphics[scale=0.5]{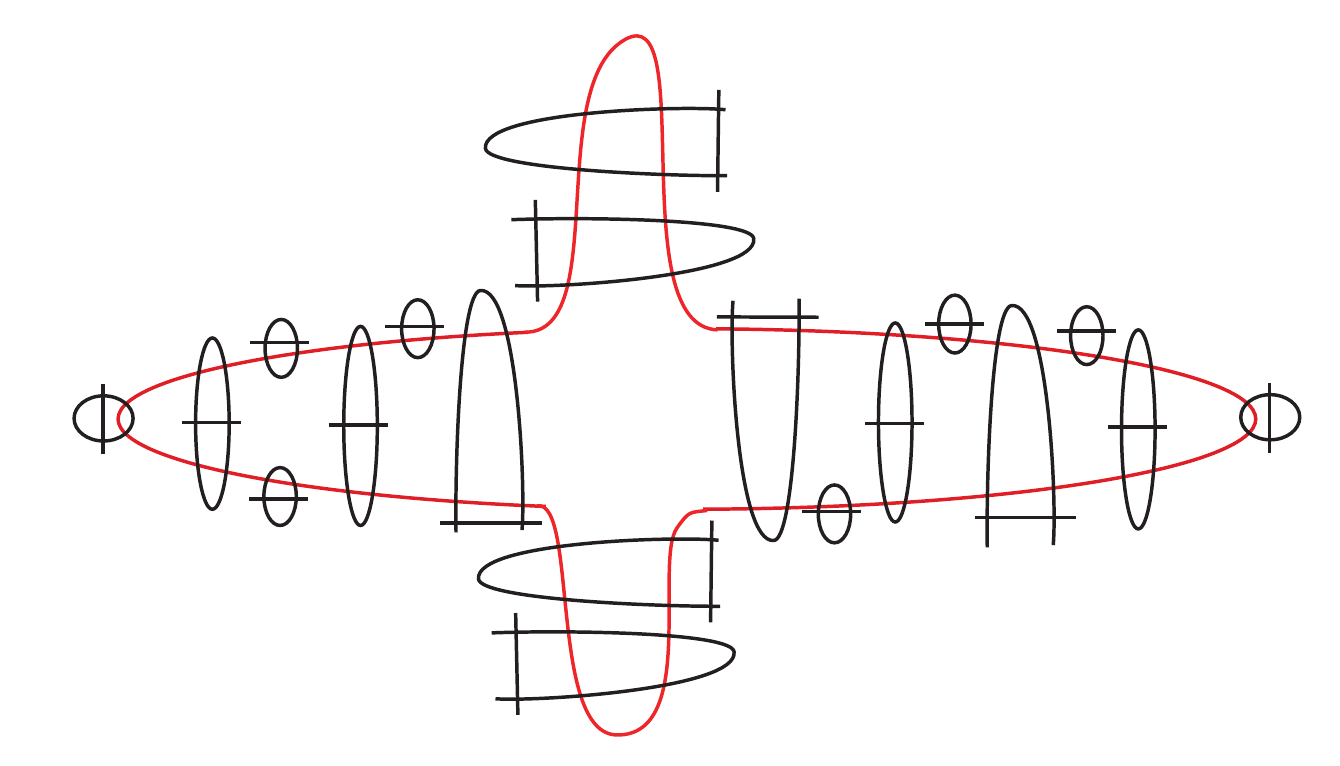}
\caption{Inner bubbles must be intersected on both sides by $\alpha$.}
\label{typesII}
\end{center}
\end{figure}

There are three types of bubble intersections with $\alpha$. A bubble of Type I intersects $\alpha$ on both sides of the bubble and the overstrand lies to the inside of $\alpha$. A Type II bubble is one that is intersected by $\alpha$ at least twice on one side and the overstrand lies to the outside of $\alpha$. A bubble of Type III intersects $\alpha$ once and has its overstrand to the outside of $\alpha$. If a bubble is of Type II or III, we consider only multiple intersections of that bubble occurring to the inside of the curve. We do not care if distinct bubbles appearing inside $\alpha$ are in fact the same bubble when also considered outside $\alpha$.

Choosing an innermost pair of intersections of $\alpha$ with a bubble of Type I, one on each side of the bubble, we we can form a loop $\gamma$ out of an arc on the corresponding saddle and an arc on $G$ that forms the boundary for a meridianal compression to the overstrand of the bubble. Since there are no meridianal compressions, it must be the case that one or more vertical components block this meridianal compression as in Figure \ref{block}(a).

\begin{figure}[h]
\begin{center}
\includegraphics[scale=0.7]{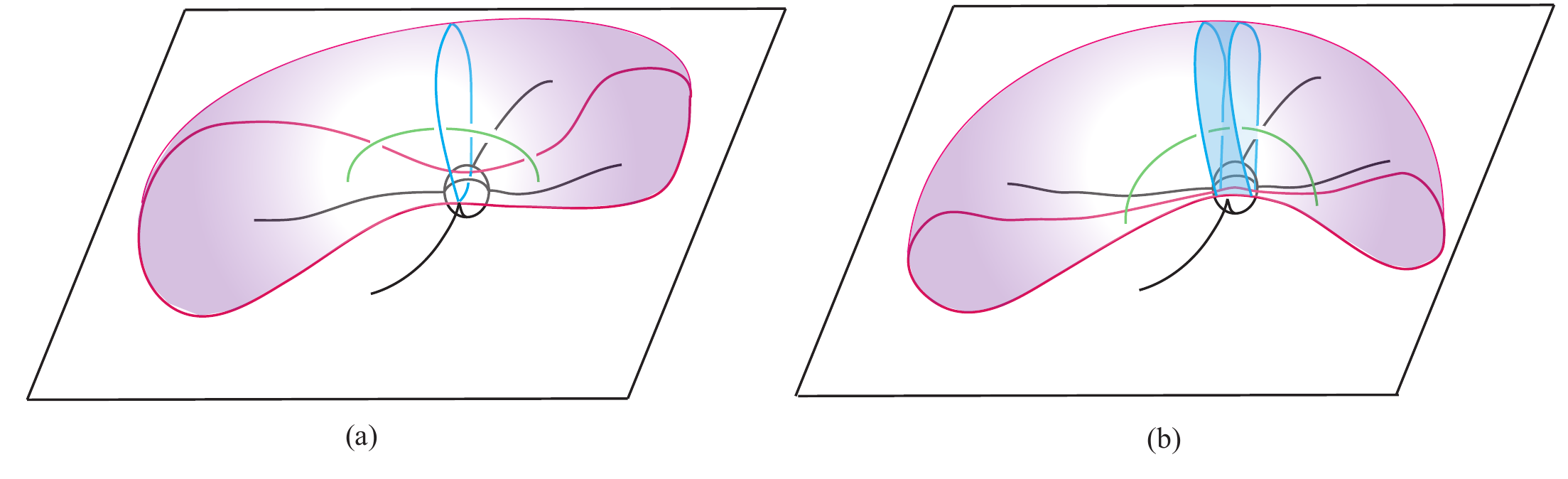}
\caption{A vertical component blocking meridianal compressions and saddle reducing isotopies.}
\label{block}
\end{center}
\end{figure}

Similarly, for a bubble of Type II, we can isotope a band on the disk bounded by $\alpha$ to eliminate two saddles unless the isotopy is blocked by one or more vertical components, as in Figure \ref{block}(b).

Relative to the vertical components, the intersection curve $\alpha$ can wind around the curves as in Figure \ref{messyintersectioncurve}, so when $\alpha$ is drawn uncomplicated, it could be the case that the vertical components are tangled with one another as if in a plat.

\begin{figure}[h]
\begin{center}
\includegraphics[scale=0.7]{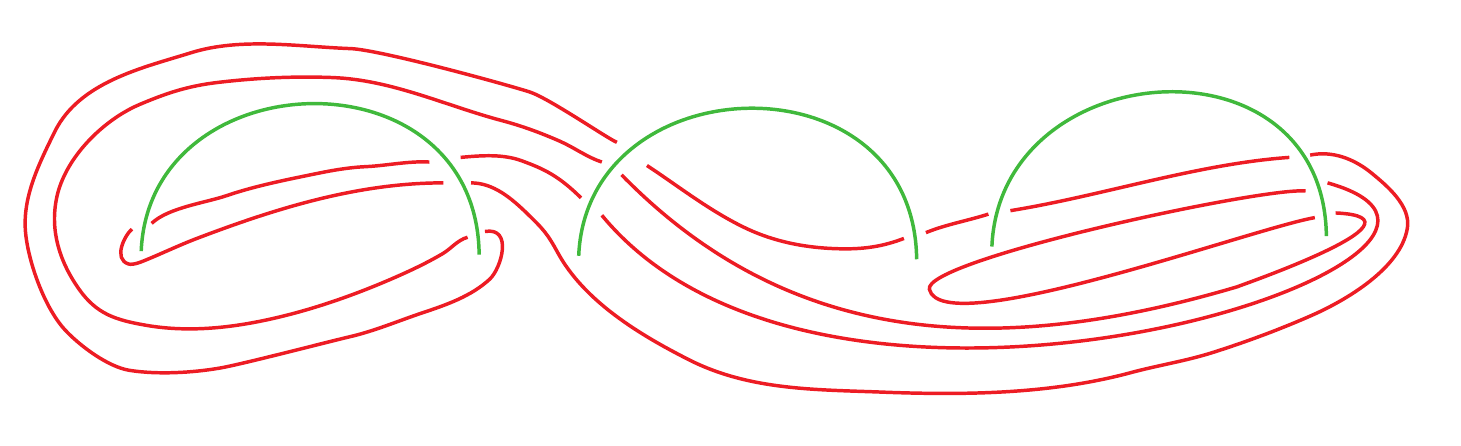}
\caption{The intersection curve $\alpha$ can wind around the vertical components..}
\label{messyintersectioncurve}
\end{center}
\end{figure}

We note the following {\it pairing property}. Consider all intersection curves on $S_+$  that intersect a vertical disk $D$ bounded by vertical component $C$, which is not necessarily in a minimal representation. In $B_+$, they must form the boundaries of surfaces in $T \cap B_+$. Then $T \cap B_+\cap D$ is a collection of arcs that pair the points in $\alpha \cap D$, some potentially nested. However, there must be an innermost arc such that it cuts a disk from $D \cap B_+$ that contains no other such arcs. Hence two adjacent intersection curves are connected by an innermost arc on $D$. We consider those two curves an {\it innermost pair}.

     We now utilize the pairing property. Suppose $C$ is a vertical component bounding a vertical punctured disk $D$ that blocks one or more meridianal compressions or saddle-reducing isotopies caused by bubbles to the inside of innermost curve $\alpha$ bounding the disk $G$ in $T \cap S_+$.

%If an innermost curve hits both sides of a bubble, then it must hit both sides of a saddle. Since the meridianal compression made possible by the loop $\gamma$ formed from an arc in that saddle and an arc in the disk must be blocked by a vertical component, that loop cannot bound a disk on $T$. Hence, we have a picture as in Figure \ref{}. In other words, all doubly hit saddles must occur in sequence on the disk boundary as in Figure \ref{block5}.

% \begin{figure}[h]
%\begin{center}
%\includegraphics[scale=0.7]{block5.pdf}
%\caption{Bubbles that are blocked by a vertical component yield intersection curves that share bubbles on $S_-$.}
%\label{block5}
%\end{center}
%\end{figure} 

Choose any vertical component $C$ with endpoints in $\alpha$ that blocks crossings from generating either a meridianal compression or an isotopy lowering the number of saddles. Notice that regions $H$ and $M$ must have a vertical component that has exactly one endpoint in the region. Isotope $C$ so that the arc that represents its projection follows $\alpha$ as closely as possible, as in Figure \ref{typesIInewer}, and let $D$ be the punctured disk that it bounds. Note that we may have to carry along other vertical components with which it is entangled.

\begin{figure}[h]
\begin{center}
\includegraphics[scale=0.7]{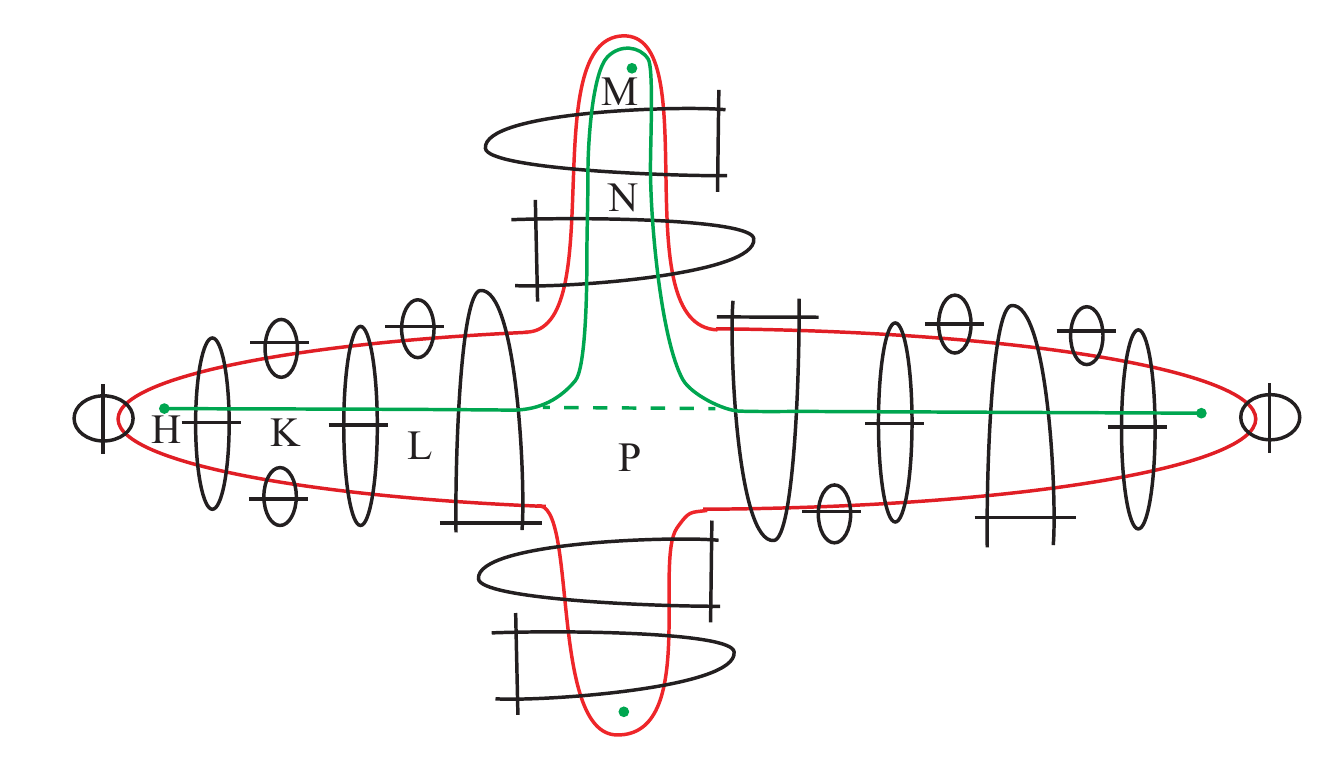}
\caption{A vertical component blocking a meridianal compression.}
\label{typesIInewer}
\end{center}
\end{figure}

Regions between bubbles inside $\alpha$ fall into six types, denoted H, K, L, M, N and P in Figure \ref{typesIInewer}..

     Shifting from the intersection curves on $S_+$ to the corresponding intersection curves on $S_-$. we find that each pair of adjacent intersection curves passing through $D$ share a bubble. See Figure \ref{typesIInewerA}. Note that the only region that might potentially cause a problem is Type P, which is we call a  juncture. If we had left our arc representing the vertical component to go straight across a juncture, there would have been a pair of adjacent curves passing under the vertical component that would not have shared a bubble. This is why we must instead follow $\alpha$ around the outside.
     
     \begin{figure}[h]
\begin{center}
\includegraphics[scale=0.7]{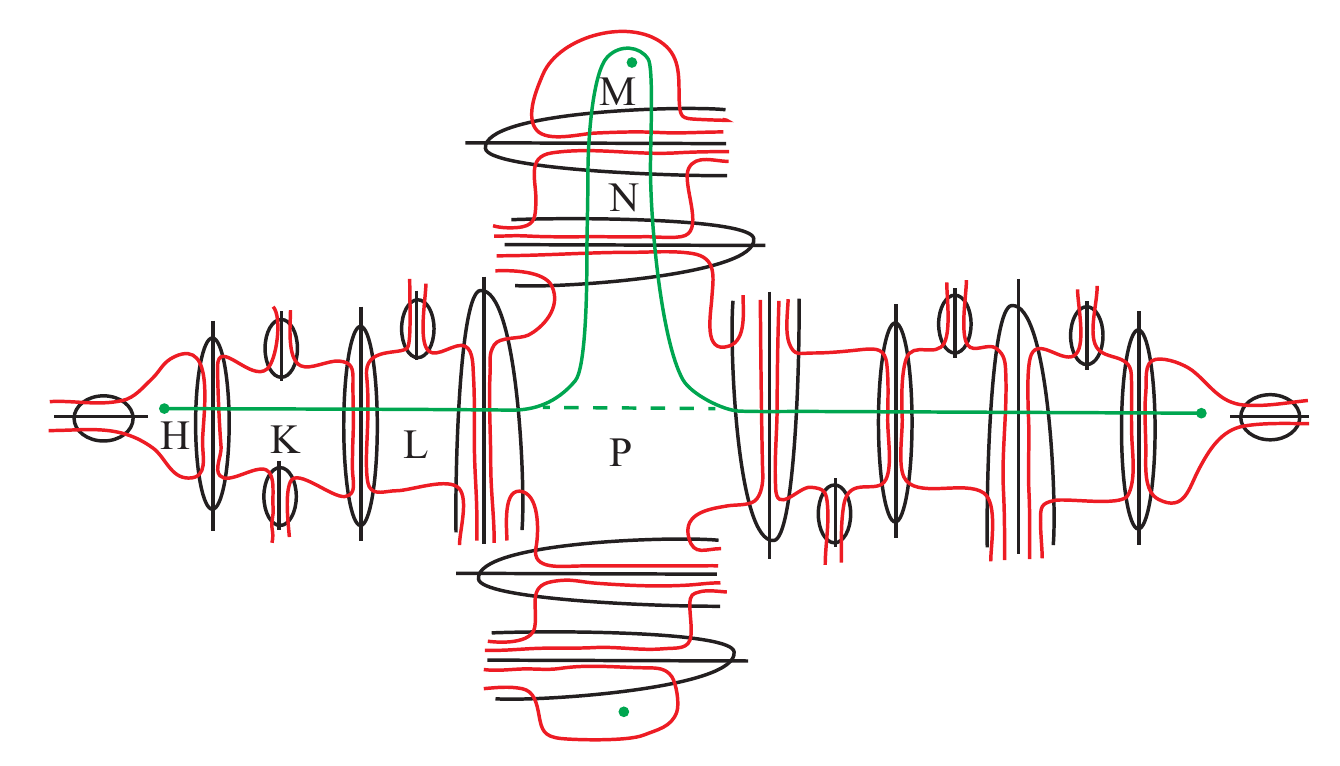}
\caption{The view from $S_-$, where all adjacent curves passing under the vertical component share a bubble.}
\label{typesIInewerA}
\end{center}
\end{figure}
     
    So there exists an innermost pair of adjacent curves passing under $C$. They must share a bubble. That bubble cannot be blocked by a vertical component because the disk $G$ prevents any such vertical component. Thus, we obtain either a meridianal compression if the bubble is a Type I or Type III bubble,  or a saddle reducing isotopy if the bubble is a Type II bubble. Hence, there can be no innermost intersection curve bounding a disk in $B_+$.
     
\begin{figure}[h]
\begin{center}
\includegraphics[scale=0.7]{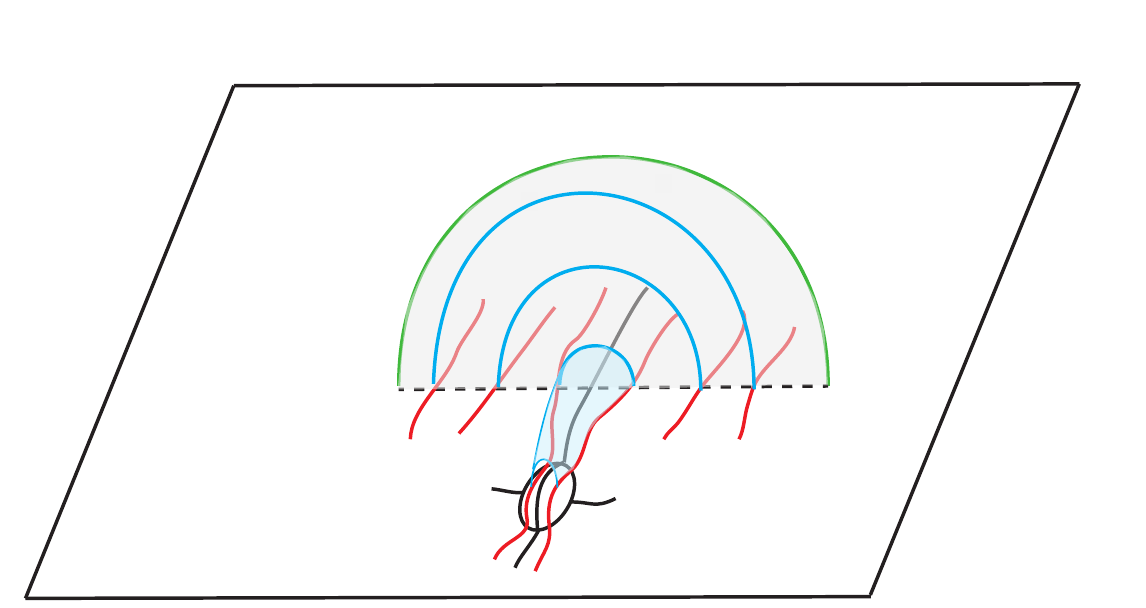}
\caption{Two intersection curves share both a bubble and a surface, yielding a meridianal compression.}
\label{block3}
\end{center}
\end{figure}

We now prove there are no annular components in $T \cap B_+$. Suppose there were such an  annulus  $A$. Note that the existence of $A$ precludes the possibility of a punctured torus component in either $T \cap B_+$ or $T \cap B_-$. Each of its boundary components bounds a disjoint disk in $S_+$ and those disks, together with $A$ bound a ball in $B_+$. Choose an annulus that is innermost in the sense that its ball contains no other ball corresponding to such an annulus. Note that $T$ then bounds a solid torus $V$ to the ball side of $A$. Because $T$ is incompressible and meridianally incompressible, there must be a set $\mathcal{C}$ of at least two vertical components that together prevent $A$ from being compressible or meridianally compressible in $V$. One possibility is that one or more of these components intersect the projection plane in the distinct disks bounded by $\partial A$. But it could also be the case that two or more vertical components are linked together above the projection plane inside the ball bounded by $A$. In this case, the same vertical components must be similarly linked  beneath the projection plane, since ultimately these vertical components form an unlink when considered as a whole. In either case, the solid torus $V$ must be unknotted, as $\mathcal{C}$ intersects every meridianal disk in $V$ and if $V$ were knotted, this would make $\mathcal{C}$ a nontrivial link by itself, when all other components are  dropped, which is a contradiction to how we constructed $J'$.

Again, in either case, there is a compression disk for $A$ that intersects only vertical components. Dropping all vertical components momentarily, that compression yields a sphere and since $J$ is non-splittable, it must lie to one or the other side of that sphere. Hence, $J$ lies to one or the other side of $T$. If $J$ is in $V$, then to avoid compressions and meridianal compressions, there must be vertical components to the other side. However, then again, we find that by dropping $J$ temporarily, the collection of vertical components forms a nontrivial link, a contradiction.

Hence, it must be the case that $J$ lies to the outside of $V$. Let $A'$ be any other annulus in $T \cap B_+$. Then its ball must also intersect $\mathcal{C}$ since $\mathcal{C}$ is the only collection of vertical components that together wrap all the way around $V$. Hence the curves $\partial A'$ must be parallel to the curves in $\partial A$. This implies that there are no disk components in $T \cap B_+$, as if there were such, there would need to be an annular component to each side of its boundary on $S_+$  to not be innermost, which the existence of $\mathcal{C}$ prevents.

Hence, we have only annular components remaining in either $T \cap B_+$ or $T \cap B_-$. However, as in Figure \ref{torus}(b), if any intersection curve bounding an annulus  intersects a bubble, there must be disk components. Hence, all intersection curves avoid bubbles. But then the boundaries of the annuli separate the projection of $J$. We could isotope any annuli away that are not parallel to $A$, so it must be the case that all annuli are parallel to $A$. Similarly in $B_-$, all annuli must be parallel. Hence, there can be only one annulus to either side for $T$ to be connected. Since $J$ is to the outside of $V$, each disjoint disk on $S_+$ bounded by $\partial A$ lies in a single complementary region of $J$. But since the endpoints of the vertical components  in $\mathcal{C}$ lie in these two disks, each such component either has both of its endpoints in the same complementary region or its two endpoints share the same complementary regions as another vertical component. In either case, this contradicts our construction of $J'$.

The last case to consider is when there is a component of $T \cap B_+$ that is an $n$-punctured torus. In this case, all other components of $T \cap B_+$ and $T\cap B_-$ must be disks. However, then all components to the $B_-$ side are disks, a possibility we have eliminated.

Thus, we have shown that an essential torus $T$ is meridianally compressible. If such a meridianally compressible torus exists,  we can meridianally compress it to obtain a twice-punctured sphere. Since $T$ was not boundary-parallel, this twice-punctured sphere implies that the link $J'$ is composite.

We  now show that the link $J'$ is prime. That is to say, we show that there are no essential annuli with both boundaries appearing as meridianal curves for link components. 

We first suppose that the boundary components of $A$ are meridians on $\partial N(J)$. So we think of $A$ as a twice-punctured sphere. Then $A$ demonstrates that $J'$ is a composite link. Since $J'$ is not composite,  it must be that the addition of the vertical components  prevents $A$ from being boundary-parallel. We assume that $A$ is not meridianally compressible by doing any compressions first and taking only one of the resultant annuli to consider. As we did with $T$, we use the results of \cite{Men} to put the punctured sphere in standard position relative to $S_+$ and $S_-$ so that no intersection curve intersects a bubble more than once and every curve either has a puncture or a bubble on it. We assume that $A$ has been chosen to minimize the number of saddles in crossing bubbles only involving $J$. We again throw away the vertical components and obtain intersection curves that no longer need to bound disks in $S_+$ and $S_-$,  and that can intersect a given bubble more than once. As in \cite{Men}, since $J$ is alternating,  a curve that crosses a bubble with its overstrand to one side of the curve must pass through an odd number of punctures (there are only two total) if it subsequently passes through another bubble with its overstrand to the same side.

The same proof we used in the case of an essential meridianally incompressible torus shows that no components of $A \cap B_{\pm}$ can be other than disks and also annuli with nontrivial boundaries on $A$. In fact,  no component of $A \cap B_{\pm}$ can be such an annulus, as any compression disk for such an annulus would  intersect only vertical components, but every compression disk for an essential annulus in $A$ must intersect $J$ since $J$ punctures the sphere corresponding to $A$ twice.

We could have up to four disks in $A \cap B_{\pm}$ with punctures on their boundaries, which, in addition to the types of regions depicted in Figure \ref{typesIInewer}, also allow for regions Q, R and S depicted in Figure \ref{typesIInewest}. But again, the argument given previously applies to show that there are no disks innermost on $S_{\pm}$ in $A \cap B_{\pm}$ that intersect bubbles, either with or without punctures on their boundary.

\begin{figure}[h]
\begin{center}
\includegraphics[scale=0.7]{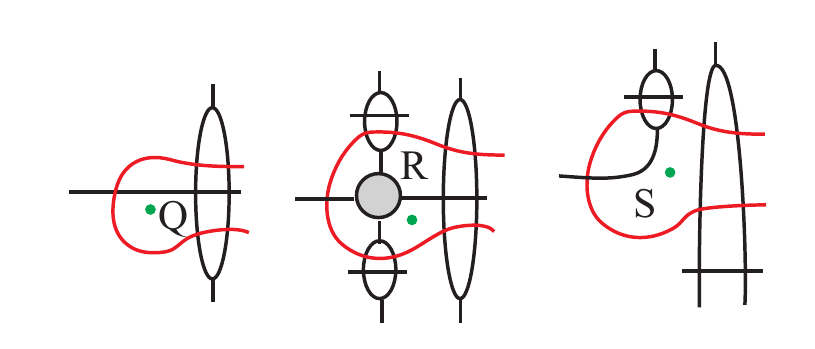}
\caption{Additional possible regions for an innermost disk with a puncture on its boundary.}
\label{typesIInewest}
\end{center}
\end{figure}

%Momentarily ignoring the up to four disks that could have a puncture on their boundary, we can also use the previous proof to show that there are no disks in $A \cap B_{\pm}$ that bound innermost disks on $S_{\pm}$. Since there are at least two innermost disks on $S_+$ and also on $S_-$, it must be the case that there are four innermost disks with punctures on their boundaries. Suppose such a disk $G$ in $A \cap B_+$  hits more than one bubble. Then as in Figure  \ref{puncture}, the other disk $G'$ with the same puncture on its boundary in $A \cap B_-$ cannot hit more than one saddle. But then $G'$ prevents any  vertical component from  blocking the meridianal compression obtained from an arc in $G$ and an arc in the saddle. Thus, we can eliminate all saddles. 

But then $A$ intersects $S_+$ and $S_-$ in the same pair of arcs from the first puncture to the second puncture, each of which does not intersect the projection of $J$. So, to one side, the projection of $J$ is a trivial arc. There can be no vertical components to this side of $A$ since there are only two adjacent regions of the projection plane to this side. This shows that $A$ cannot be an essential annulus with both boundaries meridianal on $N(J)$.

We now consider an essential annulus $A$ with both boundary components meridians on boundaries of neighborhoods of the vertical components. It is appropriate to consider  $A$ as a twice-punctured sphere. But then both punctures must be from one vertical component $C$  and $J$ must lie to one side. To the other side is a trivial arc of $C$ and additional vertical components that prevent $A$ from being boundary-parallel. But individual vertical components inside this twice-punctured sphere bound disks there contradicting the fact that the $n$-punctured disks they bound are incompressible. 

Note that there can be no annulus with one boundary a meridian on $J$ and another boundary a meridian on a vertical component, as a sphere in $S^3$ cannot be punctured once by a simple closed curve. Thus, there can be no essential annuli with meridianal boundary components in $S^3-J'$ and $J'$ is prime. Since we have shown that any essential torus must be meridianally compressible, which would yield just such an essential annulus, there are no essential tori in $S^3-J'$.

We now consider the possibility  of other essential annuli. Lemma 1.16 of \cite{Hatcher} implies that if there is an essential annulus at all, then $S^3 - J'$ is Seifert fibered, with the boundaries of the annulus as fibers. Moreover, there are either a total of one, two or three torus boundary components in $S^3 - N(J')$. If one boundary of $A$ is a meridian on a vertical component $C$, and the other is not,  then if we fill $C$ in, the annulus becomes a compressing disk on the boundary of the resulting link. If the other boundary component lies on the boundary of a neighborhood of $J$, this contradicts the hyperbolicity of $J$. If the other boundary is on a boundary of a neighborhood of a different vertical component, we have that a nontrivial curve on its boundary is trivial in the complement of $J$. However, this can never occur for a trivial link component unless the curve is a longitude, but then we are contradicting the incompressibility of the vertical disk bounded by the component. 

If a boundary-component of $A$ is a non-meridian on a vertical component $C$, then we can fill in $C$ and extend the Seifert fibration to the solid torus that we filled in, making the result Seifert fibered, which is a contradiction to the hyperbolicty of $S^3-J$ unless there is a second vertical component. If there is a second vertical component and the other boundary of $A$ is a non-meridian upon it (as it must be by our previous considerations), then we can fill it in also, and we obtain a Seifert fibration for $S^3-J$, a contradiction to its hyperbolicity. 

The only possibility left is that the boundaries of $A$ is a non-meridian upon both $C_1$ and a component $K$ of $J$, which is all of $J$, and there is a second vertical component $C_2$. Then $S^3-J'$ is a twice-punctured disk crossed with a circle obtained by taking  a regular neighborhood of $A \cup\partial N(K)$. This is embedded in $S^3$ so that each boundary torus bounds a solid torus to the exterior, which is a neighborhood of the corresponding link component. But then the component corresponding to the outer boundary of the disk, which is one of the vertical components, links both of the other components, one of which is also vertical. However, then two vertical components are linked, a contradiction to the construction of $J'$.
 
We now consider an essential annulus $A$ with  both boundary components on $N(J)$ but not meridians. Then they must both be on the same component or else $A$ would be essential in the complement of $J$, a contradiction. In this case,  each boundary of $A$ is a $(p.q)$-curve on the boundary of a neighborhood of the link component $K$, with 
$ |q| \geq 1$. Hence,  $C$ is a $(p,q)$-cable of $K$. But then  there is an essential annulus with one boundary on $\partial N(C)$ and a second boundary on $N(K)$, a possibility we have already eliminated.

\end{proof}

\section{Generalized Belted Sums}

In this section, we show that if $L_1 \#_bL_2$ is constructed from two links with a $2n$-string tangle at center, with $n$ belts around adjacent pairs of the exiting strings and $n-3$ belts around the central tangle, no two of which are parallel, as for instance appears in Figure \ref{beltedsum2} in the case $n=5$, then $\mbox{vol}(L_1 \#_bL_2) =\mbox{vol}(L_1) + \mbox{vol}(L_2) - 4(n-2)3.6638...$. To see this fact, we utilize the thrice-punctured spheres that appear in the link complement. Thrice-punctured spheres are known to be totally geodesic with a rigid structure  in a hyperbolic 3-manifold (see for instance \cite {Adams2}). In particular, any two are isometric. In the case of such a $2n$-string tangle $T_1$, there is a collection of thrice-punctured spheres that shield the part of the manifold corresponding to the $2n$-string tangle from the rest of the manifold. Cutting the manifold open along this collection of thrice-punctured spheres and then for each resulting piece,  doubling across the thrice-punctured spheres yields two link complements, one with the $2n$-string tangle to the inside and outside and the other of which is an untwisted daisy chain with additional components, as appear in Figure \ref{links}. The two halves of the original manifold must have volume exactly half of these, since the reflection doubles the volume. So the original manifold has volume exactly half the sum of these two volumes. The same is true for the link with $2n$-string tangle $T_2$. Now when we take the  two link complements, cut them both open along the collection of thrice-punctured spheres, throw away the two halves of the untwisted daisy chain, we obtain the volume of the first link plus the volume of the  second link minus the volume of the untwisted daisy chain with additional components.

In the case $n=3$, the volume of the untwisted daisy chain is 4(3.6638...) where 3.6638... is the volume of an ideal regular octahedron. The manifold is commensurable with the Whitehead link. (See Example 6.8.7 of \cite{Thurston}.) For $n>3$, we can cut the link complement open along the twice-punctured disks bounded by components that are not in the untwisted daisy chain to obtain $n-2$ pieces, each of volume $\frac 12 (4(3.6638...))$ as in Figure \ref{links}. When we take the belted sum of the two links, we discard all of these pieces from both link complements, meaning we lose a volume of $4(n-2) 3.6638...$.

\begin{figure}[h]
\begin{center}
\includegraphics[scale=0.7]{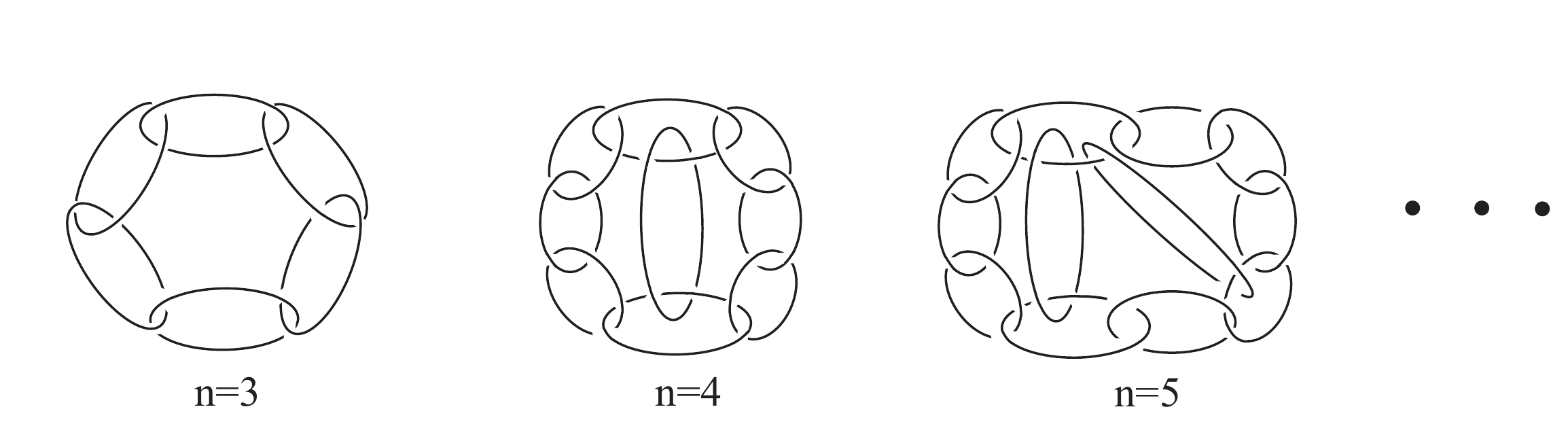}
\caption{Links of volume $4(n-3) 3.6638...$}
\label{links}
\end{center}
\end{figure}

We can further start with any link  and add components to decompose it into pieces, each of which has a volume we can determine as in the case of a generalized belted sum, to obtain the volume of the augmented link, which will bound the volume of the original link since Dehn filling always decreases volume. In the case of an alternating link, Theorem \ref{main} tells us that the generalized augmented link that we produce will be hyperbolic, which we need to know for the procedure to apply.  As an example, consider the link appearing in Figure \ref{bigexample}. We denote the link obtained from a $2n$-tangle $T_i$ by completing it as in Figure \ref{beltedsum2} by $L_i$.

\begin{figure}[h]
\begin{center}
\includegraphics[scale=0.4]{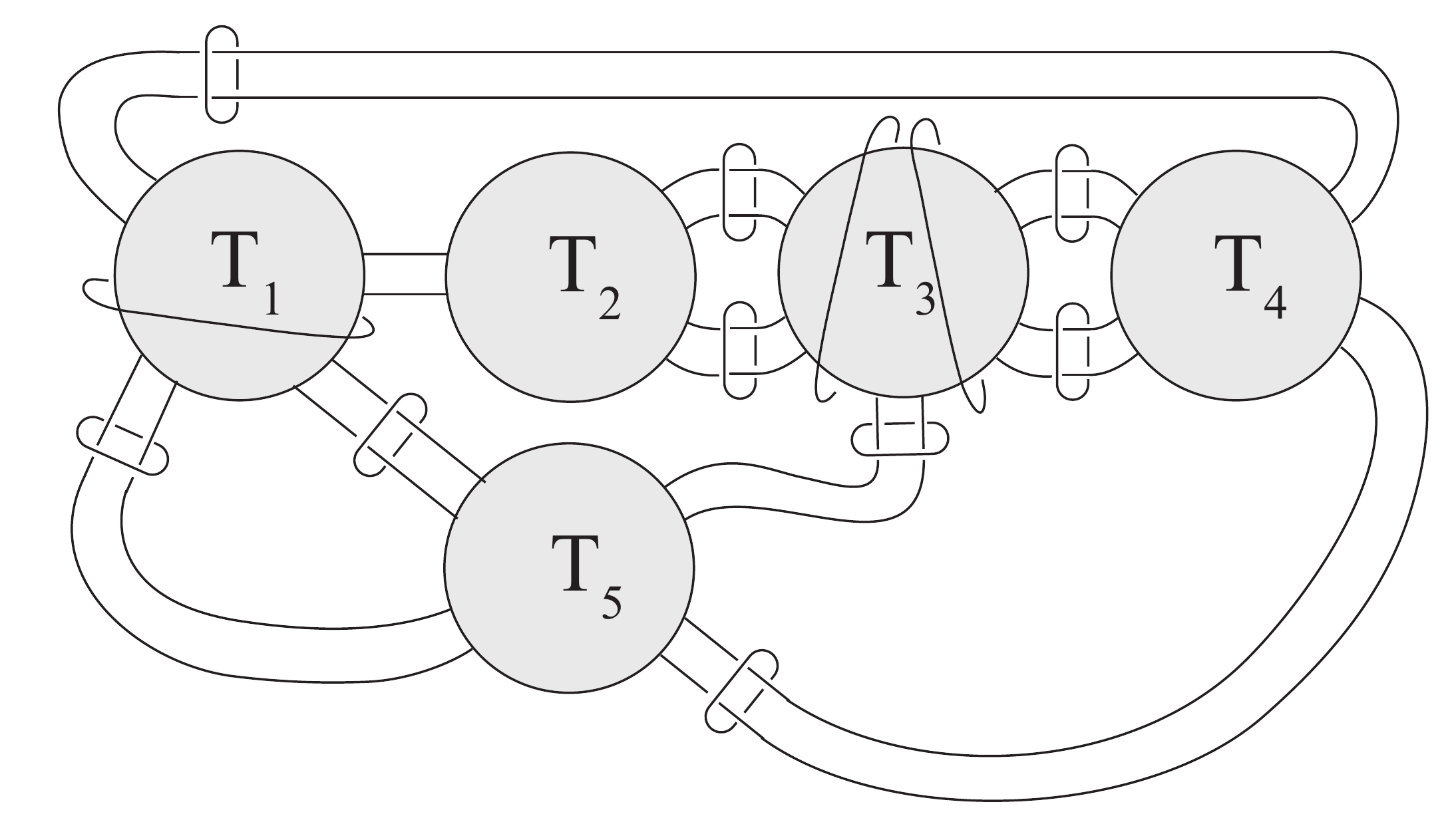}
\caption{Finding the volume of this link.}
\label{bigexample}
\end{center}
\end{figure}

We can cut along the various thrice-punctured spheres, and realize each of the resulting pieces as a link of the appropriate type, where we have thrown away a volume (n-2)3.6638... . In this case, we decompose the link complement into three 8-tangles, one 6-tangle and one 10-tangle.
 There is also another piece remaining which is a copy of the Borromean rings. Hence the volume is \
 $\mbox{vol}(L_1) + \mbox{vol}(L_2) + \mbox{vol}(L_3) + \mbox{vol}(L_4) + \mbox{vol}(L_5)- 20(3.6638...) + 7.32772...$.

\end{document}